\newtheorem{theorem}{Theorem}[section]
\newtheorem{lemma}[theorem]{Lemma}
\newtheorem{remark}[theorem]{Remark}
\begin{document}
\setcounter{page}{1}
\title{Existence and uniqueness of solutions to the constant mean curvature equation with nonzero Neumann boundary data in product manifold $M^{n}\times\mathbb{R}$}
\author{Ya Gao,~~Jing Mao$^{\ast}$,~~Chun-Lan Song}
\date{}
\protect\footnotetext{\!\!\!\!\!\!\!\!\!\!\!\!{$^{\ast}$Corresponding
author}\\
{MSC 2020:} 53C42, 53A10, 35J93.
\\
{Key Words:} Constant mean curvature, Neumann boundary condition,
convexity, Ricci curvature, product manifold.}
\maketitle ~~~\\[-15mm]
\begin{center}{\footnotesize
Faculty of Mathematics and Statistics, \\
Key Laboratory of Applied Mathematics of Hubei Province, \\
Hubei University, Wuhan 430062, China\\
Emails: jiner120@163.com, jiner120@tom.com }
\end{center}

\begin{abstract}
In this paper, we can prove the existence and uniqueness of
solutions to the constant mean curvature (CMC for short) equation
with nonzero Neumann boundary data in product manifold
$M^{n}\times\mathbb{R}$, where $M^{n}$ is an $n$-dimensional
($n\geq2$) complete Riemannian manifold with nonnegative Ricci
curvature, and $\mathbb{R}$ is the Euclidean $1$-space.
Equivalently, this conclusion gives the existence of CMC graphic
hypersurfaces defined over a compact strictly convex domain
$\Omega\subset M^{n}$ and having arbitrary contact angle.
\end{abstract}

\markright{\sl\hfill  Y. Gao, J. Mao, C.-L. Song\hfill}

\section{Introduction}
\renewcommand{\thesection}{\arabic{section}}
\renewcommand{\theequation}{\thesection.\arabic{equation}}
\setcounter{equation}{0} \setcounter{maintheorem}{0}

Recent years, the study of submanifolds of constant curvature in
product manifolds attracts many geometers' attention. For instance,
Hopf in 1955 discovered that the complexification of the traceless
part of the second fundamental form of an immersed surface
$\Sigma^{2}$, with constant mean curvature $H$, in $\mathbb{R}^3$ is
a holomorphic quadratic differential $Q$ on $\Sigma^{2}$, and then
he used this observation to get his well-known conclusion that any
immersed CMC sphere $\mathbb{S}^{2}\hookrightarrow\mathbb{R}^3$ is a
standard distance sphere with radius $1/H$. By introducing a
generalized quadratic differential $\widetilde{Q}$ for immersed
surfaces $\Sigma^{2}$ in product spaces
$\mathbb{S}^{2}\times\mathbb{R}$ and
$\mathbb{H}^{2}\times\mathbb{R}$, with $\mathbb{S}^{2}$,
$\mathbb{H}^{2}$ the $2$-dimensional sphere and hyperbolic surface
respectively, Abresch and Rosenberg \cite{ar} can extend Hopf's
result to CMC spheres in these target spaces. Meeks and Rosenberg
\cite{mr} successfully classified stable properly embedded
orientable minimal surfaces in the product space
$M\times\mathbb{R}$, where $M$ is a closed orientable Riemannian
surface. In fact, they proved that such a surface must be a product
of a stable embedded geodesic on $M$ with $\mathbb{R}$, a minimal
graph over a region of $M$ bounded by stable geodesics,
$M\times\{t\}$ for some $t\in\mathbb{R}$, or is in a moduli space of
periodic multigraphs parameterized by $P\times\mathbb{R}^{+}$, where
$P$ is the set of primitive (non-multiple) homology classes in
$H_{1}(M)$. Mazet, Rodr\'{\i}guez and Rosenberg \cite{lmh} analyzed
properties of periodic minimal or CMC surfaces in the product
manifold $\mathbb{H}^{2}\times\mathbb{R}$, and they also construct
examples of periodic minimal surfaces in
$\mathbb{H}^{2}\times\mathbb{R}$. In \cite{hfj}, Rosenberg, Schulze
and Spruck showed that a properly immersed minimal hypersurface in
$M\times\mathbb{R}^{+}$ equals some slice $M\times\{c\}$ when $M$ is
a complete, recurrent $n$-dimensional Riemannian manifold with
bounded curvature. Of course, for more information, readers can
check references therein of these papers.\footnote{ In fact, readers
can check \cite[Remark 1.2 (III)]{ggm} for this content also.
However, we prefer to write it down here again to \emph{clearly}
expose our motivation of investigating CMC hypersurfaces in product
manifold $M^{n}\times\mathbb{R}$.} Hence, it is interesting and
important to consider submanifolds of constant curvature in the
product manifold of type $M^{n}\times\mathbb{R}$.

Let $(M^{n},\sigma)$ be a complete $n$-manifold ($n\geq2$) with the
Riemannian metric $\sigma$, and let $\Omega\subset M^{n}$ be a
compact strictly convex domain with smooth boundary
$\partial\Omega$. Denote by
$\left(U_{A};w^{1}_{A},w^{2}_{A},\cdots,w^{n}_{A}\right)$ the local
coordinate coverings of $M$, and $\frac{\partial}{\partial
w^{i}_{A}}$, $i=1,2,\cdots,n$, the corresponding coordinate vector
fields, where $A\in I\subseteq N$ with $N$ the set of all positive
integers. For simplicity, we just write
$\{w^{1}_{A},w^{2}_{A},\cdots,w^{n}_{A}\}$ as
$\{w^{1},w^{2},\cdots,w^{n}\}$ to represent the local coordinates on
$M$, and write $\frac{\partial}{\partial w^{i}_{A}}$ as
$\frac{\partial}{\partial w^{i}}$ or $\partial_{i}$. In this
setting, the metric $\sigma$ should be
$\sigma=\sum_{i,j=1}^{n}\sigma_{ij}dw^{i}\otimes dw^{j}$ with
$\sigma_{ij}=\sigma(\partial_{i},\partial_{j})$. Denote by $D$,
$D^{\partial\Omega}$ the covariant derivatives on $\Omega$ and
$\partial\Omega$ respectively.  Given a smooth\footnote{ In fact, it
is not necessary to impose smoothness assumption on the initial
hypersurface $\mathcal{G}$. The $C^{2,\alpha}$-regularity for
$\mathcal{G}$ is enough to get all the estimates in the sequel.
However, in order to avoid the boring regularity arguments, which is
not necessary, here we assume $\mathcal{G}$ is smooth.} graphic
hypersurface $\mathcal{G}\subset M^{n}\times\mathbb{R}$ defined over
$\Omega$, where $M^{n}\times\mathbb{R}$ is the product manifold with
the product metric $\overline{g}=\sigma_{ij}dw^{i}\otimes
dw^{j}+ds\otimes ds$, then there exists a smooth function $u_{0}\in
C^{\infty}(\overline{\Omega})$ such that $\mathcal{G}$ can be
represented by $\mathcal{G}:=\{(x,u_{0}(x))|x\in\Omega\}$. It is not
hard to know that the metric of  $\mathcal{G}$ is given by
$g=i^{\ast}\overline{g}$, where $i^{\ast}$ is the pullback mapping
of the immersion $i:\mathcal{G}\hookrightarrow
M^{n}\times\mathbb{R}$, tangent vectors are given by
\begin{eqnarray*}
\vec{e_{i}}=\partial_{i}+D_{i}u\partial_{s}, \qquad  i=1,2,\cdots,n,
\end{eqnarray*}
 and the corresponding upward unit normal vector is given by
\begin{eqnarray*}
\vec{\gamma}=-\frac{\sum\limits_{i=1}^{n}D^{i}u\partial_{i}-\partial_s}{\sqrt{1+|Du|^2}},
\end{eqnarray*}
where $D^{j}u=\sum_{i=1}^{n}\sigma^{ij}D_{i}u$. Denote by $\nabla$
the covariant derivative operator on $M^{n}\times\mathbb{R}$, and
then the second fundamental form $h_{ij}d\omega^{i}\otimes
d\omega^{j}$ of $\mathcal{G}$ is given by
\begin{eqnarray*}
h_{ij}=-\langle\nabla_{\vec{e}_i}\vec{e}_j,\vec{\gamma}\rangle_{\overline{g}}=-\frac{D_{i}D_{j}u}{\sqrt{1+|Du|^2}}.
\end{eqnarray*}
Moreover, the scalar mean curvature of $\mathcal{G}$ is
\begin{eqnarray} \label{smc}
\qquad
H=\sum_{i=1}^{n}h^i_i=-\frac{\sum\limits_{i,k=1}^{n}g^{ik}D_{i}D_{k}u}{\sqrt{1+|Du|^2}}=-\frac{\sum\limits_{i,k=1}^{n}\left(\sigma^{ik}-\frac{D^{i}uD^{k}u}{1+|Du|^{2}}\right)D_{i}D_{k}u}{\sqrt{1+|Du|^2}}.
\end{eqnarray}
Our purpose is to consider the following CMC equation with nonzero
Neumann boundary condition (NBC for short)
\begin{eqnarray*}
(\natural)\qquad \left\{
\begin{array}{ll}
H=\mathrm{div}\left(\frac{Du}{\sqrt{1+|Du|^{2}}}\right)=\lambda \qquad \qquad &\mathrm{in} ~ \Omega,\\
D_{\vec{\nu}}u=\phi(x) \qquad \qquad &\mathrm{on} ~
\partial \Omega,
\end{array}
\right.
\end{eqnarray*}
with $\lambda\in\mathbb{R}$ a constant, and try to get the existence
of solutions to ($\natural$). Here $\vec{\nu}$ is the inward unit
normal vector of $\partial\Omega$ and $\phi(x)\in
C^{\infty}(\overline{\Omega})$. Inspired by the method for the
gradient estimate \cite[Lemma 2.2]{ggm}, we can successfully finish
this purpose. In fact, we can prove:

\begin{theorem} \label{maintheorem}
If the Ricci curvature of $M^{n}$ is nonnegative, then there exist a
unique $\lambda\in R$ and a function $u\in C^{\infty}
(\overline{\Omega})$ solving ($\natural$). Moreover, the solution
$u$ is unique up to a constant.

\end{theorem}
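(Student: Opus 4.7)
The natural route is the continuity method. For $t\in[0,1]$ I consider the family
\[
\mathrm{div}\!\left(\frac{Du}{\sqrt{1+|Du|^{2}}}\right)=\lambda_{t}\ \text{in } \Omega,\qquad D_{\vec{\nu}}u=t\phi\ \text{on } \partial\Omega,
\]
together with the normalization $u(x_{0})=0$ at some fixed $x_{0}\in\Omega$ to remove the translation invariance of the operator. At $t=0$ the pair $(u,\lambda)=(0,0)$ solves it, so the set $T\subset[0,1]$ of parameters admitting a $C^{2,\alpha}$ solution is nonempty. The plan is to show $T$ is both open and closed. Openness follows from the implicit function theorem applied to the map $(u,\lambda)\mapsto\bigl(H(u)-\lambda,\ D_{\vec{\nu}}u-t\phi\bigr)$ on the Banach space of normalized $C^{2,\alpha}$ functions paired with $\mathbb{R}$: at a solution the linearization is a uniformly elliptic oblique problem whose single compatibility obstruction is absorbed by the scalar unknown $\lambda_{t}$, and whose kernel (the constants) is trivialized by the normalization via Hopf's lemma. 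Closedness reduces to uniform $C^{2,\alpha}$ bounds in $t$.

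I would obtain those bounds in the ladder $\lambda_{t}\to C^{0}\to C^{1}\to C^{2}\to C^{2,\alpha}$. An \emph{a priori} bound on $\lambda_{t}$ comes for free from integrating the equation and invoking the divergence theorem, yielding $\lambda_{t}|\Omega|=-\int_{\partial\Omega}t\phi/\sqrt{1+|Du|^{2}}\,dS$ and hence $|\lambda_{t}|\le\|\phi\|_{\infty}|\partial\Omega|/|\Omega|$. The pivotal step, and the one I expect to be the main obstacle, is the global gradient estimate. Following the strategy of \cite[Lemma 2.2]{ggm}, I would apply the maximum principle to an auxiliary function of the form $\log\sqrt{1+|Du|^{2}}+\psi$, where $\psi$ is built from the distance to $\partial\Omega$ and the boundary datum $t\phi$, with parameters tuned so that an interior maximum is excluded by a Bochner-type computation exploiting the hypothesis $\mathrm{Ric}_{M^{n}}\ge 0$, while a boundary maximum is excluded by differentiating the Neumann condition along $\partial\Omega$, where the strict convexity of $\Omega$ supplies the decisive favorable sign. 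Once $|Du|$ is bounded, the $C^{0}$ bound follows from the normalization by integration along minimizing geodesics in $\overline{\Omega}$.

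With $|Du|$ under control the equation is uniformly elliptic with smooth coefficients, so the global $C^{2}$ and $C^{2,\alpha}$ estimates up to the boundary for oblique derivative problems (of Lieberman--Trudinger type), followed by Schauder bootstrapping, upgrade the solution to $C^{\infty}$ and deliver the compactness needed for closedness. Thus $T=[0,1]$ and existence of the pair $(u,\lambda)$ is established.

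For uniqueness, let $(u_{1},\lambda_{1})$ and $(u_{2},\lambda_{2})$ be two smooth solutions of $(\natural)$ and set $v=u_{1}-u_{2}$. A standard convex combination of the two equations shows $v$ satisfies a linear uniformly elliptic equation $a^{ij}D_{i}D_{j}v+b^{i}D_{i}v=\lambda_{1}-\lambda_{2}$ in $\Omega$ with the homogeneous Neumann condition $D_{\vec{\nu}}v=0$ on $\partial\Omega$; the strong maximum principle together with Hopf's boundary point lemma forces $v\equiv\mathrm{const}$, which instantly yields $\lambda_{1}=\lambda_{2}$ and shows that $u$ is unique up to an additive constant. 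The hardest piece in the whole scheme is clearly the boundary half of the gradient estimate, since the inhomogeneous oblique condition $D_{\vec{\nu}}u=\phi$ destroys the sign-structure that would trivialize the homogeneous analogue; the essential new ingredient is engineering a boundary auxiliary function that balances $\phi$ against the strict convexity of $\Omega$ while remaining compatible with the interior Bochner estimate driven by $\mathrm{Ric}\ge 0$.
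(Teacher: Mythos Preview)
Your outline is correct, but the existence framework you propose is genuinely different from the paper's. The paper does \emph{not} run a continuity method in the boundary datum; instead it uses the Altschuler--Wu $\varepsilon$-regularization: for each $\varepsilon>0$ and $\upsilon\in\mathbb{R}$ one solves
\[
\mathrm{div}\!\left(\frac{Du}{\sqrt{1+|Du|^{2}}}\right)=\upsilon+\varepsilon u\ \text{in }\Omega,\qquad D_{\vec{\nu}}u=\phi\ \text{on }\partial\Omega,
\]
where the zeroth-order term $\varepsilon u$ gives uniqueness directly by the maximum principle (no implicit function theorem, no Fredholm analysis). One then observes that the solutions $u_{\varepsilon,\upsilon}=u_{\varepsilon,0}-\upsilon/\varepsilon$ are strictly decreasing in $\upsilon$, builds explicit barriers $u_{0}\pm M/\varepsilon$ to pin $\upsilon$ into a bounded interval, picks $\upsilon_{\varepsilon}$ so that $u_{\varepsilon,\upsilon_{\varepsilon}}$ is normalized at a point, and passes to the limit $\varepsilon\to 0$. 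Your continuity method trades this elementary monotonicity/barrier argument for the more standard openness-via-IFT step, which requires identifying the one-dimensional kernel and cokernel of the linearized Neumann problem; both routes are valid, and the core analytic input---the global gradient estimate of Lemma~2.1, driven by strict convexity at the boundary and $\mathrm{Ric}\ge 0$ in the interior---is exactly the one you describe. For uniqueness the paper also differs slightly: it writes the difference equation in \emph{divergence} form and integrates by parts to force $\lambda_{1}=\lambda_{2}$ first, then invokes Hopf; your non-divergence maximum-principle argument reaches the same conclusion but you should note that one must rule out $\lambda_{1}\neq\lambda_{2}$ before concluding $v$ is constant, not after.
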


\begin{remark}
\rm{ (I) The cosine of the contact angle between $\vec{\gamma}$ and
$\vec{\nu}$ is
\begin{eqnarray*}
 \langle\vec{\gamma},\vec{\nu}\rangle_{\overline{g}}=\frac{D_{\vec{\nu}}u}{\sqrt{1+|Du|^2}}.
\end{eqnarray*}
Hence, if the contact angle is arbitrary, then there should exist
some $\varphi(x)\in
  C^{\infty}(\overline{\Omega})$, $|\varphi(x)|\leq1$ on
  $\partial\Omega$ such that
  $D_{\vec{\nu}}u\big{|}_{\partial\Omega}=\varphi(x)\cdot\sqrt{1+|Du|^2}$. Based on this reason, we can say that although
  the boundary value problem\footnote{ We write it as BVP for short.} ($\natural$) has nonzero NBC, the geometric meaning of the
  NBC in ($\natural$) is not sufficient. \emph{Can we deal with the
  BVP ($\natural$) if the RHS of the nonzero NBC therein contains $Du$
  also?} Inspired by a recent work \cite{wwx}, Gao and Mao
  \cite{gm2} considered a generalization of the BVP ($\natural$) where the NBC can be replaced by
  \begin{eqnarray*}
D_{\vec{\nu}}u=\phi(x)\cdot\left(\sqrt{1+|Du|^2}\right)^{\frac{1-q}{2}}
  \end{eqnarray*}
for any $q>0$, and similar conclusion to Theorem \ref{maintheorem}
could be derived.
\\
(II) Clearly, the solvability of ($\natural$) implies the existence
of CMC graphic hypersurfaces defined over $\Omega\subset M^{n}$ and
having arbitrary contact angle.\\
(III) Clearly, if $M^{n}\equiv\mathbb{R}^{n}$, our main conclusion
here becomes \cite[Theorem 1.3]{mww} exactly. That is to say,
Theorem \ref{maintheorem} covers \cite[Theorem 1.3]{mww} as a
special case.}
\end{remark}

\section{Proof of Theorem \ref{maintheorem}}
\renewcommand{\thesection}{\arabic{section}}
\renewcommand{\theequation}{\thesection.\arabic{equation}}
\setcounter{equation}{0} \setcounter{maintheorem}{0}

We know that if $\Omega$ is a strictly convex domain with smooth
boundary $\partial\Omega$, then there exists a smooth function
$\beta$ on $\Omega$ such that $\beta|_{\Omega}<0$,
$\beta|_{\partial\Omega}=0$, ${\sup_\Omega}|D\beta|\leq1$,
 \begin{eqnarray*}
 \left(\beta_{ij}\right)_{n\times n}\geq
k_{0}\left(\delta_{ij}\right)_{n\times n}
 \end{eqnarray*}
 for some positive constant $k_{0}>0$,
$\beta_{\vec{\nu}}=D_{\vec{\nu}}\beta=-1$ and $|D\beta|=1$ on
$\partial\Omega$. Besides, since $\Omega$ is strictly convex, we
have
 \begin{eqnarray*}
\left(h_{ij}^{\partial\Omega}\right)_{(n-1)\times(n-1)}\geq
\kappa_{1}\left(\delta_{ij}\right)_{(n-1)\times(n-1)},
 \end{eqnarray*}
where $h_{ij}^{\partial\Omega}$, $1\leq i,j\leq n-1$, is the second
fundamental form of the boundary $\partial\Omega$, and
$\kappa_{1}>0$ is the minimal principal curvature of
$\partial\Omega$.

\begin{lemma} \label{lemma2.1}
Let $\varepsilon>0$ and $\phi\in C^{3}(\overline{\Omega})$. Assume
that there exists a positive constant $L$ such that
\begin{eqnarray*}
           |\phi|_{C^{3}(\overline{\Omega})}\leq L,
\end{eqnarray*}
and the Ricci curvature of $M^{n}$ is nonnegative. Let $u$ be the
solution to BVP ($\natural$). Then there exists a
constant~$c_{1}=c_{1}(n,\Omega,L)$~such that
 \begin{eqnarray*}
 \sup_{\overline{\Omega}}|Du|\leq c_{1}.
 \end{eqnarray*}
\end{lemma}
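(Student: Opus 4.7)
The plan is to apply the maximum principle to an auxiliary function built from $w := \sqrt{1+|Du|^2}$ and the strictly convex barrier $\beta$ described at the beginning of the section. Concretely, I would consider
\[
  \Phi := \log w + \alpha\, \beta
\]
on $\overline{\Omega}$ for a positive constant $\alpha$ to be chosen sufficiently large depending only on $n$, $\Omega$ and $L$. Since $\beta \le 0$ on $\overline{\Omega}$, any uniform upper bound $\Phi \le C$ translates immediately into the desired estimate $\sup_{\overline{\Omega}}|Du|\le c_{1}$. It therefore suffices to rule out the possibility that $\Phi$ attains arbitrarily large values, which I would do by analyzing separately an interior maximum and a boundary maximum.

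First I would treat the interior case. Suppose $\Phi$ attains its maximum at some $x_{0}\in\Omega$. Rewriting $(\natural)$ in non-divergence form via \eqref{smc}, the natural linearized operator is
\[
  \mathcal{L} := \sum_{i,k=1}^{n} a^{ik} D_{i}D_{k}, \qquad a^{ik} := \sigma^{ik} - \frac{D^{i}u\, D^{k}u}{1+|Du|^{2}}.
\]
At $x_{0}$, the first-order condition $D_{i}\Phi(x_{0})=0$ yields $D_{i}w/w = -\alpha\, D_{i}\beta$, and the Hessian condition gives $\mathcal{L}\Phi(x_{0})\le 0$. I would compute $\mathcal{L}(\log w)$ by differentiating equation $(\natural)$ once, commuting covariant derivatives, and invoking the Bochner formula for $|Du|^{2}$ on $(M^{n},\sigma)$; this produces a Ricci curvature term that is discarded with a favorable sign thanks to the nonnegative Ricci hypothesis. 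The strict convexity estimate $\beta_{ij}\ge k_{0}\delta_{ij}$ contributes the key positive term $\alpha\, a^{ik}\beta_{ik}\ge \alpha k_{0}\operatorname{tr}(a^{ik})$, whose trace is bounded below by $1/w^{2}$ times a positive constant. Using the first-order relation to absorb the cross terms $a^{ik} D_{i}w\, D_{k}u/w^{2}$, and choosing $\alpha$ large enough compared with the remaining terms controlled by $|\lambda|$ and the geometry of $\Omega$, forces $w(x_{0})$ to be bounded by a constant depending only on $n,\Omega,L$.

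Next I would address the boundary case: if $\Phi$ attains its maximum at $x_{1}\in\partial\Omega$, Hopf's lemma (with $\vec{\nu}$ pointing inward) gives $D_{\vec{\nu}}\Phi(x_{1})\le 0$, which combined with $D_{\vec{\nu}}\beta=-1$ reads $D_{\vec{\nu}}w(x_{1}) \le \alpha\, w(x_{1})$. On the other hand, from
\[
  w\, D_{\vec{\nu}}w = \sum_{i=1}^{n} D^{i}u\, D_{i}D_{\vec{\nu}}u,
\]
I would decompose $Du$ on $\partial\Omega$ into its tangential part $D'u$ and normal part $\phi\,\vec{\nu}$, differentiate the Neumann condition $D_{\vec{\nu}}u=\phi$ in tangential directions, and rewrite the surviving Hessian terms using the second fundamental form of $\partial\Omega$. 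The strict convexity $h_{ij}^{\partial\Omega}\ge\kappa_{1}\delta_{ij}$ then contributes a positive term of order $\kappa_{1}|D'u|^{2}/w$, which dominates the $O(L)$ contributions coming from $\phi$ once $|Du|(x_{1})$ is large. Hence, with $\alpha$ already fixed by the interior step, the inequality $D_{\vec{\nu}}w\le\alpha w$ at $x_{1}$ is violated unless $|Du|(x_{1})$ is already controlled, completing the estimate.

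The main obstacle lies in the interior step: one has to keep precise track of the cancellations between the terms produced by $\mathcal{L}(\log w)$ and those arising from $\alpha\mathcal{L}\beta$, and verify that after substituting the first-order condition the residual negative terms are absorbed by $\alpha k_{0}\operatorname{tr}(a^{ik})$. The nonnegative Ricci assumption plays its role precisely where the Bochner identity on $(M^{n},\sigma)$ introduces a term of the form $-\operatorname{Ric}(Du,Du)/w^{2}$, which would otherwise have the wrong sign. By comparison, the boundary step is geometrically transparent once the tangential–normal decomposition is carried out, relying only on the strict convexity of $\partial\Omega$ and the $C^{3}$ control of $\phi$.
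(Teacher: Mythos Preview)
Your two steps pull in opposite directions on the size of $\alpha$, and this is a genuine gap. In the boundary step you correctly observe that the convexity of $\partial\Omega$ contributes a term of order $\kappa_{1}|D'u|^{2}/w\approx\kappa_{1}w$ to $D_{\vec{\nu}}w$; but then the inequality $D_{\vec{\nu}}w\le\alpha w$ is only violated for large $w$ when $\alpha<\kappa_{1}$, i.e.\ when $\alpha$ is \emph{small}. This is incompatible with ``choosing $\alpha$ large enough'' in the interior step. Moreover, your boundary computation omits the contribution $\phi\,u_{nn}/w$ coming from the normal component of $Du$; since $u_{nn}$ is not a priori controlled (the equation only bounds a particular combination of all second derivatives), this term can swamp the convexity contribution unless substantially more work is done.

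The paper avoids both difficulties by modifying the auxiliary function. Instead of $\log w$ it uses $\log|D\omega|^{2}$ with $\omega:=u+\phi\beta$, so that on $\partial\Omega$ one has
\[
D_{\vec{\nu}}\omega=D_{\vec{\nu}}u+\phi\,D_{\vec{\nu}}\beta+\beta\,D_{\vec{\nu}}\phi=\phi-\phi+0=0.
\]
With the normal derivative of $\omega$ killed on the boundary, the computation of $D_{\vec{\nu}}\Phi$ reduces to a pure second-fundamental-form term and yields $D_{\vec{\nu}}\Phi\ge 2\kappa_{1}-\zeta>0$ for any $\zeta<2\kappa_{1}$, ruling out a boundary maximum with a \emph{small} constant and with no $u_{nn}$ term appearing. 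The interior analysis is then carried out so that it, too, produces a contradiction for small $\zeta$ (specifically $\zeta<\min\{2\kappa_{1},5k_{0}\eta\}$), so the two steps are consistent. The nonnegative Ricci hypothesis enters exactly where you anticipated, when commuting covariant derivatives produces the term $a^{ij}\omega^{k}R^{1}_{ikj}u_{1}$; what makes the paper's computation more delicate than a direct Bochner argument is precisely that $\zeta$ must be taken small rather than large.
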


\begin{proof}
We use a similar method to that of the proof of \cite[Lemma
2.2]{ggm}.

Denote by
$a^{ij}:=(1+|Du|^{2})\sigma^{ij}-D^{i}uD^{j}u$,~$f=\varepsilon u$,
$v=\sqrt{1+|Du|^{2}}$. Then first equation in ($\natural$) can be
rewritten as
\begin{eqnarray*}
\sum\limits_{i,j=1}^{n}a^{ij}u_{ij}=fv^{3}.
\end{eqnarray*}

Let
 \begin{eqnarray*}
\Phi=\log|D\omega|^{2}+\zeta\beta,
 \end{eqnarray*}
where~$\omega=u+\phi(x)\beta$, and $\zeta$~is a positive constant
determined later. For convenience, denoted by $G=-\phi(x)\beta$.

We first show the maximum of $\Phi(x)$ on $\overline{\Omega}$~cannot
be achieved at the boundary $\partial\Omega$. This fact can be shown
by using the same argument as (2.1) in \cite{ggm}. However, for
completeness, we would like to repeat here.

Choose a suitable local coordinates around a point
$x_{0}\in\overline{\Omega}$ such that $\tau_{n}$ is the inward unit
normal vector of $\partial\Omega$, and $\tau_{i}$,
$i=1,2,\cdots,n-1$, are the unit smooth tangent vectors of
$\partial\Omega$. Denote by $D_{\tau_{i}}u:=u_{i}$,
$D_{\tau_{j}}u:=u_{j}$, $D_{i}D_{j}u:=u_{ij}$ for $1\leq i,j \leq
n$. By the boundary condition, one has
 \begin{eqnarray*}
D_{\tau_{n}}\omega\big{|}_{\partial\Omega}=\omega_{n}\big{|}_{\partial\Omega}=u_{n}\big{|}_{\partial\Omega}+\left(\phi_{n}\beta+\beta_{n}\phi\right)\big{|}_{\partial\Omega}=0.
 \end{eqnarray*}
  If $\Phi(x,t)$ attains its maximum at
$(x_{0},t_{0})\in\partial\Omega$, then at $x_{0}$, we have
\begin{eqnarray} \label{add-1}
0\geq\Phi_{n}&=&\frac{|D\omega|^{2}_{n}}{|D\omega|^{2}}-\zeta =
\sum\limits_{k=1}^{n-1}\frac{2\omega^{k}D_{\tau_{n}}D_{\tau_{k}}\omega}{|D\omega|^{2}}-\zeta\nonumber\\
&=&
\sum\limits_{k=1}^{n-1}\frac{2\omega^{k}[\tau_{k}(\tau_{n}(\omega))-(D_{\tau_{k}}\tau_{n})\omega]}{|D\omega|^{2}}-\zeta\nonumber\\
&=&
-\sum\limits_{k=1}^{n-1}\frac{2\omega^{k}(D_{\tau_{k}}\tau_{n})(\omega)}{|D\omega|^{2}}-\zeta\nonumber\\
&=&
-\sum\limits_{k=1}^{n-1}\frac{2\omega^{k}\omega_{j}\langle D_{\tau_{k}}\tau_{n},\tau_{j}\rangle_{\sigma}}{|D\omega|^{2}}-\zeta\nonumber\\
&=&
\sum\limits_{k=1}^{n-1}\frac{2\omega^{k}\omega_{j}\langle D_{\tau_{k}}\tau_{j},\tau_{n}\rangle_{\sigma}}{|D\omega|^{2}}-\zeta\nonumber\\
&=&
\sum\limits_{k,j=1}^{n-1}\frac{2\omega^{k}\omega_{j}h_{kj}^{\partial\Omega}}{|D\omega|^{2}}-\zeta\nonumber\\
&\geq& 2\kappa_{1}-\zeta.
\end{eqnarray}
Hence, by taking $0<\zeta<2\kappa_{1}$, the maximum of $\Phi$ can
only be achieved in $\Omega$. BTW, there is one thing we would like
to mention here, that is , in (\ref{add-1}), the relation
 \begin{eqnarray*}
w^{k}=\sum\limits_{l=1}^{n}\sigma^{kl}w_{l}=\sum\limits_{l=1}^{n-1}\sigma^{kl}w_{l}
\end{eqnarray*}
holds. Here we have used the convention in Riemannian Geometry to
deal with the subscripts and superscripts, and this convention will
also be  used in the sequel.

 Assume $\Phi(x)$ attains its maximum at some point
 $x_{0}\in\Omega$. At $x_0$, as explained in the proof of \cite[Lemma
2.2]{ggm}, we can make a suitable change to the coordinate vector
fields $\{\tau_{1},\tau_{2},\cdots,\tau_{n}\}$ such that
$|Du|=u_{1}$, $(u_{ij})_{2\leq i,j\leq n}$ is diagonal, and
$(\sigma_{ij})_{2\leq i,j\leq n}$ is diagonal. Clearly, in this
setting, $\sigma^{11}=1$. Besides, we  have
 \begin{eqnarray*}
g^{11}&=&\frac{1}{v^{2}}, \quad g^{ij}=0~\mathrm{for} ~2\leq i,j\leq
n,i\neq j,~\mathrm{and}~g^{ii}=\sigma^{ii}~\mathrm{for} ~i\geq2,
\end{eqnarray*}
with $v=\sqrt{1+|Du|^{2}}=\sqrt{1+u^{2}_{1}}$, which leads to a fact
that, under this suitable frame field,
 \begin{eqnarray*}
a^{11}=1,\qquad a^{ii}=v^{2}\sigma^{ii}~~~~i=2,\ldots,n.
\end{eqnarray*}
Assume that $u_{1}$ is big enough such that $u_{1}$, $\omega_{1}$,
$\omega^{1}$, $|D\omega|$, and $v$ are equivalent with each other at
$x_{0}$. Otherwise Lemma \ref{lemma2.1} is proved. At $x_{0}$, we
have
\begin{eqnarray} \label{e-1}
\Phi_{i}=\frac{|D\omega|^{2}_{i}}{|D\omega|^{2}}+\zeta\beta_{i}=0,
\end{eqnarray}
and
 \begin{eqnarray*}
\Phi_{ij}=\frac{|D\omega|^{2}_{ij}}{|D\omega|^{2}}-
\frac{|D\omega|^{2}_{i}|D\omega|^{2}_{j}}{|D\omega|^{4}}
+\zeta\beta_{ij},
 \end{eqnarray*}
which implies
\begin{eqnarray}\label{e-2}
0&\geq&\sum\limits_{i,j=1}^{n}a^{ij}\Phi_{ij}\nonumber\\
&=&
\sum\limits_{i,j=1}^{n}\frac{a^{ij}|D\omega|^{2}_{ij}}{|D\omega|^{2}}-\zeta^{2}\sum\limits_{i,j=1}^{n}a^{ij}\beta_{i}\beta_{j}+\zeta \sum\limits_{i,j=1}^{n}a^{ij}\beta_{ij}\nonumber\\
&\triangleq& I+II+III.
\end{eqnarray}
By (\ref{e-1}), for $i=1,2,\ldots,n$, one has
\begin{eqnarray} \label{e-3}
\sum\limits_{k=1}^{n}\omega^{k}u_{ki}=\sum\limits_{k=1}^{n}\omega^{k}\omega_{ki}+\sum\limits_{k=1}^{n}\omega^{k}G_{ki}=-\frac{\zeta
\beta_{i}v^{2}}{2}+O(v),
\end{eqnarray}
which implies
\begin{eqnarray*}
\omega^{1}u_{1i}+\omega^{i}u_{ii}=-\frac{\zeta\beta_{i}v^{2}}{2}+O(v)
\end{eqnarray*}
holds for $i=2,3,\ldots,n$. Hence, for $i=2,3,\ldots,n$, we have
\begin{eqnarray} \label{e-4}
u_{1i}=O(1)-\frac{\zeta\beta_{i}v}{2}-\frac{\omega^{i}}{v}u_{ii},
\end{eqnarray}
and specially, for $i=1$, it gives
\begin{eqnarray*}
\omega^{1}u_{11}+\sum\limits_{k=2}^{n}\omega^{k}u_{k1}=O(v)-\frac{\zeta\beta_{1}v^{2}}{2}.
\end{eqnarray*}
By (\ref{e-4}), it is not hard to get
\begin{eqnarray} \label{add-2}
u_{11}&=&O(1)-\frac{\zeta\beta_{1}v}{2}-\sum\limits_{k=2}^{n}\frac{\omega^{k}}{v}\left(O(1)-\frac{\zeta\beta_{k}v}{2}-\frac{\omega^{k}}{v}u_{kk}\right)\nonumber\\
&=&
O(1)-\frac{\zeta\beta_{1}v}{2}+\sum\limits_{k=2}^{n}\left(\frac{\omega^{k}}{v}\right)^{2}u_{kk},
\end{eqnarray}
which, together with the equation
$u_{11}+(1+u_{1}^{2})\sum\limits_{k=2}^{n}\sigma^{kk}u_{kk}=fv^{3}$,
leads to the following facts:
\begin{eqnarray} \label{e-5}
u_{11}+\sum\limits_{i=2}^{n}\sigma^{ii}u_{ii}=fv+\frac{u_{1}^{2}}{v^{2}}u_{11}=fv+O(1)-\frac{u_{1}^{2}\zeta\beta_{1}}{2v
}+\sum\limits_{k=2}^{n}\frac{(u_{1}\omega^{k})^{2}}{v^{4}}u_{kk}
\end{eqnarray}
and
\begin{eqnarray} \label{e-6}
fv=\frac{u_{11}}{v^{2}}+\sum\limits_{i=2}^{n}\sigma^{ii}u_{ii}=O(\frac{1}{v^{2}})-\frac{\zeta\beta_{1}}{2v}+\sum\limits_{k=2}^{n}\left[\sigma^{kk}
+\frac{(\omega^{k})^{2}}{v^{4}}\right]u_{kk}.
\end{eqnarray}
Now, we are going to estimate (\ref{e-2}). First, by direct
calculation, we can get
\begin{eqnarray} \label{e-7}
II=-\zeta^{2}\sum\limits_{i,j=1}^{n}a^{ij}\beta_{i}\beta_{j}=-\zeta^{2}\left(\beta^{2}_{1}+v^{2}\sum\limits_{i=2}^{n}\sigma^{ii}\beta^{2}_{i}\right)
\end{eqnarray}
and
\begin{eqnarray} \label{e-8}
III=\zeta\sum\limits_{i,j=1}^{n}a^{ij}\beta_{ij}\geq\zeta
k_{0}\left(1+(n-1)\eta+(n-1)\eta u^{2}_{1}\right),
\end{eqnarray}
where $\eta:=\min\{\sigma^{22},\sigma^{33},\ldots,\sigma^{nn}\}$.

Second, we need to estimate the term $I$. By direct computation, we
have
\begin{eqnarray}\label{e-9}
\sum\limits_{i,j=1}^{n}a^{ij}|D\omega|^{2}_{ij}&=&
2\sum\limits_{i,j,k=1}^{n}a^{ij}\omega^{k}u_{kij}-2\sum\limits_{i,j,k=1}^{n}a^{ij}\omega^{k}G_{kij}+2\sum\limits_{i,j,k=1}^{n}a^{ij}\sigma^{kk}u_{ki}u_{kj}\nonumber\\
&&\qquad-
4\sum\limits_{i,j,k=1}^{n}a^{ij}\sigma^{kk}u_{ki}G_{kj}+2\sum\limits_{i,j,k=1}^{n}a^{ij}\sigma^{kk}G_{ki}G_{kj}\nonumber\\
&:=& I_{1}+I_{2}+I_{3}+I_{4}+I_{5}.
\end{eqnarray}
For term $I_1$, using the Ricci identity, one has
\begin{eqnarray}\label{e-10}
I_{1}&=&2\sum\limits_{i,j,k=1}^{n}a^{ij}\omega^{k}u_{kij}\nonumber\\
&=&
2\sum\limits_{i,j,k=1}^{n}a^{ij}\omega^{k}( u_{ijk}+R^{l} _{ikj}u_{l})\nonumber\\
&=&
2\sum\limits_{i,j,k=1}^{n}\omega^{k}[(fv^{3})_{k}-(a^{ij})_{k}u_{ij}]+2\sum\limits_{i,j,k=1}^{n}a^{ij}\omega^{k}R^{l}_{ikj}u_{l}\nonumber\\
&=&
2\sum\limits_{k=1}^{n}(\varepsilon u_{k}\omega^{k}v^{3}+3fv\sum\limits_{l=1}^{n}u^{l}u_{lk}\omega^{k})-4\sum\limits_{i,l,k=1}^{n}\sigma^{ii}u_{ii}u^{l}u_{lk}\omega^{k}\nonumber\\
&&\quad+
4\sum\limits_{i,j,k,l=1}^{n}\sigma^{il}\sigma^{1j}u_{1}u_{lk}u_{ij}\omega^{k}+2\sum\limits_{i,j,k,l=1}^{n}a^{ij}\omega^{k}R^{l}_{ikj}u_{l}\nonumber \\
&\geq&
 \left[6fv-4(u_{11}+\sum\limits_{i=2}^{n}\sigma^{ii}u_{ii})\right]\sum\limits_{k,l=1}^{n}u^{l}u_{lk}\omega^{k}\nonumber\\
 &&\quad+
 4u_{1}\sum\limits_{i,k,l=1}^{n}\sigma^{il}u_{lk}u_{1i}\omega^{k}+2\sum\limits_{i,j,k=1}^{n}a^{ij}\omega^{k}R^{1}_{ikj}u_{1}\nonumber\\
&\triangleq& I_{11}+I_{12}+I_{13},
\end{eqnarray}
where $R^{l}_{ikj}$, $1\leq i,j,k,l\leq n$, are coefficients of the
curvature tensor on $M^n$.

As in the proof of \cite[Lemma 3.1]{ggm}, one knows that
$|f|=|\varepsilon u|\leq c_{2}(n,\Omega)$ for some nonnegative
constant $c_2$ depending only on $n$ and the domain $\Omega$ itself.
Therefore, for the term $I_{11}$, using (\ref{e-3}), (\ref{e-5}) and
(\ref{e-6}), we can obtain
\begin{eqnarray}\label{e-11}
I_{11}&=&\left[6fv-4(u_{11}+\sum\limits_{i=2}^{n}\sigma^{ii}u_{ii})\right]u_{1}\sum\limits_{k=1}^{n}u_{1k}\omega^{k}\nonumber\\
&=&
u_{1}\left[6fv-4\left(fv+O(1)-\frac{u_{1}^{2}\zeta\beta_{1}}{2v}+\sum\limits_{k=2}^{n}\frac{(u_{1}\omega^{k})^{2}}{v^{4}}u_{kk}\right)\right]
\cdot\left(-\frac{\zeta\beta_{1}v^{2}}{2}+O(v)\right)\nonumber\\
&=&
u_{1}\left[2fv+O(1)+\frac{2u_{1}^{2}\zeta\beta_{1}}{v}-4\sum\limits_{k=2}^{n}\frac{(u_{1}\omega^{k})^{2}}{v^{4}}u_{kk}\right]\cdot\left
(-\frac{\zeta\beta_{1}v^{2}}{2}+O(v)\right)\nonumber\\
&=&
-fu_{1}\zeta\beta_{1}v^{3}+O(v^{3})-\zeta^{2}\beta^{2}_{1}u^{3}_{1}v+\sum\limits_{k=2}^{n}O(v)u_{kk}\nonumber\\
&=&
\left[O(\frac{1}{v^{2}})+\frac{\zeta\beta_{1}}{2v}-\sum\limits_{k=2}^{n}\left(\sigma^{kk}+\frac{(\omega^{k})^{2}}{v^{4}}\right)u_{kk}\right]
u_{1}\zeta\beta_{1}v^{2}+O(v^{3})\nonumber\\
&&\quad -
\zeta^{2}\beta^{2}_{1}u^{3}_{1}v+\sum^{n}_{k=2}O(v)u_{kk}\nonumber\\
&=&
O(v^{3})-\zeta^{2}\beta^{2}_{1}u_{1}^{3}v+\sum\limits_{k=2}^{n}\left(O(v)-\sigma^{kk}u_{1}\zeta\beta_{1}v^{2}\right)u_{kk}.
\end{eqnarray}
For the term $I_{12}$, applying (\ref{e-3}),  (\ref{e-4}) and
(\ref{add-2}), we have
\begin{eqnarray}\label{e-12}
I_{12}&=&4u_{1}\sum\limits_{i,k,l=2}^{n}\sigma^{il}u_{lk}u_{1i}\omega^{k}\nonumber\\
&=&
4u_{1}\sum\limits_{k=1}^{n}u_{1k}u_{11}\omega^{k}+4u_{1}\sum\limits_{k=1}^{n}\sum\limits_{i=2}^{n}\sigma^{ii}u_{ki}u_{1i}\omega^{k}\nonumber\\
&=&
4u_{1}\left(O(1)-\frac{\zeta\beta_{1}v}{2}+\sum\limits_{k=2}^{n}(\frac{\omega^{k}}{v})^{2}u_{kk}\right)\cdot\left(-\frac{\zeta\beta_{1}v^{2}}{2}+O(v)\right)\nonumber\\
&&\quad+
4u_{1}\sum\limits_{i=2}^{n}\sigma^{ii}\left(O(1)-\frac{\zeta\beta_{1}v}{2}-\frac{\omega^{i}}{v}u_{ii}\right)\cdot\left(-\frac{\zeta\beta_{1}v^{2}}{2}+O(v)\right)\nonumber\\
&=&
O(v^{3})+u_{1}\zeta^{2}\beta^{2}_{1}v^{3}+\sum\limits_{i=2}^{n}u_{1}\zeta^{2}\beta^{2}_{i}v^{3}\sigma^{ii}+\sum\limits_{i=2}^{n}O(v^{2})u_{ii}.
\end{eqnarray}
Moreover, for the term $I_{13}$, it can be deduced that
\begin{eqnarray}\label{e-13}
I_{13}&=&2\sum\limits_{i,j,k=1}^{n}g^{ij}\omega^{k}R^{1}_{ikj}u_{1}\nonumber\\
&=&
2\sum\limits_{k=1}^{n}\omega^{k}R^{1}_{1k1}u_{1}+2\sum\limits_{k=1}^{n}\sum\limits_{i=2}^{n}v^{2}\sigma^{ii}\omega^{k}R^{1}_{iki}u_{1}\nonumber\\
&=&
2\sum\limits_{k=2}^{n}\sigma^{kk}\omega_{k}R^{1}_{1k1}v+\sum\limits_{i=2}^{n}2v^{4}\sigma^{ii} R^{1}_{i1i}+2\sum\limits_{i,k=2}^{n}\sigma^{ii}\sigma_{kk}\omega_{k}v^{3} R^{1}_{iki}\nonumber\\
&\geq& O(v^{3}),
\end{eqnarray}
where the last inequality holds because of the nonnegativity of the
Ricci curvature on $M^n$, and the usage of the assumption that
$u_{1}$, $\omega_{1}$, $\omega^{1}$, $|D\omega|$, and $v$ are big
enough and equivalent with each other at $x_{0}$.

Substituting (\ref{e-11}), (\ref{e-12}), (\ref{e-13}) into
(\ref{e-10}) yields
\begin{eqnarray} \label{e-14}
I_{1}\geq
O(v^{3})+u_{1}\zeta^{2}\beta_{1}^{2}v+\sum\limits_{i=2}^{n}u_{1}\zeta^{2}\beta^{2}_{i}v^{3}\sigma^{ii}
+\sum\limits_{i=2}^{n}\left(O(v^{2})-\sigma^{ii}u_{1}\zeta\beta_{1}v^{2}\right)u_{ii}.
\end{eqnarray}
It is easy to observe that
\begin{eqnarray} \label{e-15}
I_{2}=O(v^{3}),\qquad I_{5}=O(v^{2}).
\end{eqnarray}
For the term $I_{4}$, we have
\begin{eqnarray}\label{e-16}
I_{4}&=&-4\sum\limits_{i,j,k=1}^{n}a^{ij}\sigma^{kk}u_{ki}G_{kj}\nonumber\\
&=&
-4u_{11}G_{11}-4(1+v^{2})\sum\limits_{i=2}^{n}\sigma^{ii}u_{1i}G_{1i}-4v^{2}\sum\limits_{i=2}^{n}(\sigma^{ii})^{2}u_{ii}G_{ii}\nonumber\\
&\geq&
-\left[2u_{11}^{2}+2G_{11}^{2}+\frac{1+v^{2}}{2}\sum\limits_{i=2}^{n}(\sigma^{ii})^{2}u^{2}_{1i}+8(1+v^{2})\sum\limits_{i=2}^{n}G^{2}_{1i}\right]\nonumber\\
&&\quad-\left[\frac{v^{2}}{2}\sum\limits_{i=2}^{n}(\sigma^{ii})^{2}
u_{ii}^{2}+8v^{2}\sum\limits_{i=2}^{n}(\sigma^{ii})^{2}G_{ii}^{2}\right].
\end{eqnarray}
Combing (\ref{e-3}), (\ref{e-4}), (\ref{e-15}) and (\ref{e-16}), one
can easily get
\begin{eqnarray}\label{e-17}
\sum\limits_{i=2}^{5}I_{i}&\geq&\frac{3}{2}(1+v^{2})\sum\limits_{i=2}^{n}(\sigma^{ii})^{2}(u_{1i})^{2}+\frac{3}{2}v^{2}\sum\limits_{i=2}^{n}(\sigma^{ii})^{2}(u_{ii})^{2}+O(v^{3})\nonumber\\
&=&
\frac{3}{2}(1+v^{2})\sum\limits_{i=2}^{n}(\sigma^{ii})^{2}\left(O(1)-\frac{\alpha\beta_{i}v}{2}-\frac{\omega^{i}}{v}u_{ii}\right)^{2}\nonumber\\
&&\quad +\frac{3}{2}v^{2}\sum\limits_{i=2}^{n}(\sigma^{ii})^{2}(u_{ii})^{2}+O(v^{3})\nonumber\\
&=&
O(v^{3})+\frac{3}{8}(1+v^{2})\sum\limits_{i=2}^{n}(\sigma^{ii})^{2}\alpha^{2}\beta^{2}_{i}v^{2}\nonumber\\
&&\quad+\sum\limits_{i=2}^{n}(\frac{3}{2}v^{2}(\sigma^{ii})^{2}+O(1))u^{2}_{ii}+\sum\limits_{i=2}^{n}O(v^{2})u_{ii}.
\end{eqnarray}
Applying (\ref{e-14}), (\ref{e-17}) and the fact that
$ax^{2}+bx\geq-\frac{b^{2}}{4a}$ for $a>0$, we can get
\begin{eqnarray}\label{e-18}
\sum\limits_{i=1}^{5}I_{i}&\geq&
O(v^{3})+u_{1}\zeta^{2}\beta^{2}_{1}v+\sum\limits_{i=2}^{n}u_{1}\sigma^{ii}\zeta^{2}\beta^{2}_{i}v^{3}
+\frac{3}{8}(1+v^{2})\sum\limits_{i=2}^{n}(\sigma^{ii})^{2}\zeta^{2}\beta^{2}_{i}v^{2}\nonumber\\
&&\quad+
\sum\limits_{i=2}^{n}(\frac{3}{2}v^{2}(\sigma^{ii})^{2}+O(1))u^{2}_{ii}
+\sum\limits_{i=2}^{n}(O(v^{2})-\sigma^{ii}u_{1}\alpha\beta_{1}v^{2})u_{ii}\nonumber\\
&\geq&
O(v^{3})+u_{1}\zeta^{2}\beta^{2}_{1}v+\sum\limits_{i=2}^{n}u_{1}\sigma^{ii}\zeta^{2}\beta^{2}_{i}v^{3}
+\frac{3}{8}(1+v^{2})\sum\limits_{i=2}^{n}(\sigma^{ii})^{2}\zeta^{2}\beta^{2}_{i}v^{2}\nonumber\\
&&\quad-
\sum\limits_{i=2}^{n}\frac{\left[O(v^{2})-\sigma^{ii}u_{1}\zeta
\beta_{1}v^{2}\right]^{2}}{6v^{2}(\sigma^{ii})^{2}+O(1)}.
\end{eqnarray}
Since $v$ has been assumed to be large enough, and $u_{1}$,
$\omega_{1}$, $\omega^{1}$, $|D\omega|$, and $v$ are equivalent with
each other, we have
\begin{eqnarray} \label{e-19}
\zeta^{2}\beta^{2}_{1}+\sum\limits_{i=2}^{n}u_{1}\sigma^{ii}\zeta^{2}\beta^{2}_{i}v
+\frac{3}{8}(1+v^{2})\sum\limits_{i=2}^{n}(\sigma^{ii})^{2}\zeta^{2}\beta^{2}_{i}\geq\zeta^{2}v^{2}\sum\limits_{i=2}^{n}\sigma^{ii}\beta^{2}_{i}
\end{eqnarray}
and
\begin{eqnarray} \label{e-20}
-\sum\limits_{i=2}^{n}\frac{\left[O(v^{2})-\sigma^{ii}u_{1}\zeta
\beta_{1}v^{2}\right]^{2}}{6v^{4}(\sigma^{ii})^{2}
+O(v^{2})}\geq-\frac{n-1}{5}\zeta^{2}v^{2}\beta^{2}_{1}+O(1).
\end{eqnarray}
Substituting (\ref{e-19}), (\ref{e-20}) into (\ref{e-18}), one can
easily get
\begin{eqnarray} \label{e-21}
I=\sum\limits_{i,j=1}^{n}\frac{a^{ij}|D\omega|^{2}_{ij}
}{|D\omega|^{2}}&=&\frac{\sum_{i=1}^{5}I_{i}}{|D\omega|^{2}}\nonumber\\
&\geq&O(v)+\alpha^{2}v^{2}\sum\limits_{i=2}^{n}\sigma^{ii}\beta^{2}_{i}-\frac{n-1}{5}\alpha^{2}v^{2}\beta^{2}_{1}.
\end{eqnarray}
Substituting (\ref{e-7}), (\ref{e-8}), (\ref{e-21}) into (\ref{e-2})
yields
\begin{eqnarray*}
0&\geq& a^{ij}\Phi_{ij}\\
&\geq&
O(v)+\zeta^{2}v^{2}\sum\limits_{i=2}^{n}\sigma^{ii}\beta^{2}_{i}-\frac{n-1}{5}\zeta^{2}v^{2}\beta^{2}_{1}\\
&&\quad+
\zeta k_{0}\left(1+(n-1)\eta+(n-1)\eta u^{2}_{1}\right)-\zeta^{2}\left(\beta^{2}_{1}+(1+u_{1}^{2})\sum\limits_{i=2}^{n}\sigma^{ii}\beta^{2}_{i}\right)\\
&\geq& O(v)-\frac{n-1}{5}\zeta^{2}v^{2}\beta^{2}_{1}+\zeta
k_{0}(n-1)\eta v^{2},
\end{eqnarray*}
which, by taking $0<\zeta<\min\{2\kappa_{1}, 5k_{0}\eta\}$, leads to
a contradiction. Hence, $Du$ must be bounded at $x_0$. Since
$\Omega$ is compact, an easy argument can give a universal constant
$c_1$, depending only on $n$, $L$ and $\Omega$ itself, as the upper
bound for $\max_{\Omega}|Du|$. This completes the proof of Lemma
\ref{lemma2.1}.
\end{proof}

Using Lemma \ref{lemma2.1} and a similar method to that in the proof
of \cite[Theorem 1.3]{mww} (actually, this technique has been shown
in \cite{aw1} already), we have:

\begin{proof}[Proof of Theorem \ref{maintheorem}] We are going to
use an approximation argument to get our main conclusion.

We first show that for any given $\varepsilon>0$ and $\upsilon\in
\mathbb{R}$, there exists a unique solution to the following BVP
  \begin{eqnarray*}
(\ast_{\varepsilon, \upsilon})\qquad \left\{
\begin{array}{ll}
 \mathrm{div}\left(\frac{Du}{\sqrt{ 1+|Du|^{2} }}\right)=\upsilon+\varepsilon u\qquad \qquad &\mathrm{in} ~ \Omega,\\
 D_{\vec{\nu}}u=\phi(x)\qquad \qquad &\mathrm{on} ~\partial\Omega.
 \end{array}
\right.
\end{eqnarray*}
For fixed $\varepsilon>0$, if $\upsilon=0$, using a similar argument
to that in the proof of \cite[Theorem 3.2]{ggm}, we can get the
$C^{0}$-estimate of the solution $u_{\varepsilon,0}$ to
$(\ast_{\varepsilon,0})$. Together with Lemma \ref{lemma2.1}, the
existence of the solution $u_{\varepsilon,0}$ follows. Besides, by
using Hopf's lemma, the uniqueness of the solution can be obtained
also.

Set
 \begin{eqnarray*}
 u_{\varepsilon,\upsilon}(x):=u_{\varepsilon,0}(x)-\frac{\upsilon}{\varepsilon}.
 \end{eqnarray*}
It is clear that $u_{\varepsilon,\upsilon}(x)$ is strictly
decreasing w.r.t. $\upsilon$. It is easy to check that
$u_{\varepsilon,\upsilon}(x)$ solves the BVP $(\ast_{\varepsilon,
\upsilon})$. Because of the construction of
$u_{\varepsilon,\upsilon}(x)$, its uniqueness is obvious.

We \textbf{claim}:
 \begin{itemize}

\item for any $\varepsilon>0$, there exists a unique, uniformly
bounded constant $\upsilon_{\varepsilon}$ such that
$|u_{\varepsilon,\upsilon_{\varepsilon}}(x)|_{C^{1}(\overline{\Omega})}$
is uniformly bounded.

\end{itemize}

Let $u_{0}(x)\in C^{\infty}(\overline{\Omega})$ be a fixed function
with $D_{\vec{\nu}}u_{0}=\phi(x)$. If one chooses
 \begin{eqnarray*}
 M:=1+\max_{\overline{\Omega}}|u_{0}|+\max_{\overline{\Omega}}\left|\mathrm{div}\left(\frac{Du_{0}}{\sqrt{1+|Du_{0}|^{2}}}\right)\right|
 \end{eqnarray*}
and
 \begin{eqnarray*}
u_{\varepsilon}^{+}:=u_{0}+\frac{M}{\varepsilon},
  \end{eqnarray*}
then, by linearization process, it follows that
\begin{eqnarray}\label{e-22}
0&<&M-\mathrm{div}\left(\frac{Du_{0}}{\sqrt{1+|Du_{0}|^{2}}}\right)+\varepsilon u_{0}\nonumber\\
 &=&\left[\mathrm{div}\left(\frac{Du_{\varepsilon,0}}{\sqrt{1+|Du_{\varepsilon,0}|^{2}}}\right)-\varepsilon u_{\varepsilon,0}\right]
-\left[\mathrm{div}\left(\frac{Du_{\varepsilon}^{+}}{\sqrt{1+|Du_{\varepsilon}^{+}|^{2}}}\right)-\varepsilon u_{\varepsilon}^{+}\right]\nonumber\\
&=&\sum\limits_{k,l=1}^{n}D_{k}\left[m_{kl}(x)D_{l}
(u_{\varepsilon,0}-u^{+}_{\varepsilon})\right]-\varepsilon(u_{\varepsilon,0}-u_{\varepsilon}^{+})
\end{eqnarray}
where
 \begin{eqnarray*}
  m_{kl}(x)=\int^{1}_{0} \frac{\partial
A^{k}}{\partial
p_{l}}(sDu_{\varepsilon,0}+(1-s)Du_{\varepsilon}^{+})ds
 \end{eqnarray*}
 and
  \begin{eqnarray*}
A^{k}(\vec{p})=\frac{p^{k}}{\sqrt{1+|\vec{p}|^{2}}}.
 \end{eqnarray*}
Hence, by applying the fact
\begin{eqnarray*}
D_{\vec{\nu}}(u_{\varepsilon,0}-u^{+}_{\varepsilon})=0
\end{eqnarray*}
and the maximum principle of second-order PDEs, we know that
$u^{+}_{\varepsilon}$ is a supersolution of
$(\ast_{\varepsilon,0})$. Similarly,
$u^{-}_{\varepsilon}:=u_{0}-\frac{M}{\varepsilon}$ is a subsolution
of $(\ast_{\varepsilon,0})$. Therefore, we have
$u_{\varepsilon,M}<u_{0}<u_{\varepsilon,-M}$.

Since $u_{\varepsilon,\upsilon}$ is strictly decreasing, for any
$\varepsilon\in(0,1)$, there exists a unique
$\upsilon_{\varepsilon}\in(-M,M)$ such that
$u_{\varepsilon,\upsilon_{\varepsilon}}(0)=u_{0}(0)$. By Lemma
\ref{lemma2.1}, one can easily get the uniform bound, which is
independent of $\varepsilon$, for
$|Du_{\varepsilon,\upsilon_{\varepsilon}}|$. Besides, since
$u_{\varepsilon,\upsilon_{\varepsilon}}(0)=u_{0}(0)$ and
$|Du_{\varepsilon,\upsilon_{\varepsilon}}|$ is uniformly bounded,
the uniform $C^0$-bound follows directly. Hence, we can get the
uniform bound for
$|u_{\varepsilon,\upsilon_{\varepsilon}}|_{C^{1}(\overline{\Omega})}$
(i.e., our \textbf{claim} is true), and by the Schauder theory of
second-order PDEs, the uniform higher order derivative estimates can
be ensured.

Letting $\varepsilon\rightarrow0$, extracting subsequence if
necessary, we can infer that there exists a constant $\lambda$ and a
smooth function $u^{\infty}(x)$ such that
 \begin{eqnarray*} \label{e-23}
\upsilon_{\varepsilon}+\varepsilon
u_{\varepsilon,\upsilon_{\varepsilon}}\rightarrow\lambda, \qquad
u_{\varepsilon,\upsilon_{\varepsilon}}\rightarrow u^{\infty},
\end{eqnarray*}
and it is easy to check that $(\lambda,u^{\infty})$ satisfies
$(\natural)$.

At the end, we would like to show the uniqueness of the solution to
$(\natural)$. If $(\chi,u^{\chi})$ also solves $(\natural)$, then
almost the same argument as in (\ref{e-22}) yields
 \begin{eqnarray} \label{e-23}
  \left\{
\begin{array}{ll}
 \sum\limits_{k,l=1}^{n}D_{k}\left[\widetilde{g}_{kl}(x)D_{l}(u^{\infty}-u^{\chi})\right]=\lambda-\chi\qquad \qquad &\mathrm{in} ~ \Omega,\\
 D_{\vec{\nu}}(u^{\infty}-u^{\chi})=0 &\mathrm{on}~\partial\Omega,
 \end{array}
\right.
 \end{eqnarray}
where $\{\widetilde{g}_{kl}(x)\}$ is positive definite and
 \begin{eqnarray*}
\widetilde{g}_{kl}(x)=\int^{1}_{0}\frac{\partial A^{k}}{\partial
p_{l}}(sDu^{\infty}+(1-s)Du^{\chi})ds.
 \end{eqnarray*}
According to (\ref{e-23}), integrating by parts gives that
$\lambda=\chi$ and then Hopf's lemma shows that
$u^{\infty}-u^{\chi}$ must be a constant. This finishes the proof of
Theorem \ref{maintheorem}.
\end{proof}

\vspace{5mm}

\section*{Acknowledgments}
\renewcommand{\thesection}{\arabic{section}}
\renewcommand{\theequation}{\thesection.\arabic{equation}}
\setcounter{equation}{0} \setcounter{maintheorem}{0}

This research was supported in part by the NSF of China (Grant Nos.
11801496 and 11926352), the Fok Ying-Tung Education Foundation
(China), and Hubei Key Laboratory of Applied Mathematics (Hubei
University).

 \end{document}